\documentclass[12pt]{article}
\usepackage[T2A]{fontenc}
\usepackage[cp1251]{inputenc}
\usepackage[russian,english]{babel}
\usepackage{amsmath,amssymb,amsfonts,amsthm,bbding}

\theoremstyle{plain}
\newtheorem{theorem}{Theorem}

\newtheorem{corollary}[theorem]{Corollary}

\sloppy

\let\geq\geqslant
\let\leq\leqslant

\def\starr{\dot\times}

\begin{document}

\title{Restricted Product Sets under Unique Representability}
\author{F. Petrov}
\maketitle

\let\thefootnote\relax\footnote{
St. Petersburg Department of
V.~A.~Steklov Institute of Mathematics of
the Russian Academy of Sciences, St. Petersburg State University.
E-mail: fedyapetrov@gmail.com.
Supported by Russian Scientific Foundation grant 14-11-00581.}

\begin{abstract}
We prove some results of Kemperman--Scherk
 type for restricted product sets in multiplicative
groups of fields (in particular, for cyclic groups). The proofs
use polynomial method.
\end{abstract}

Recall a classical theorem by Scherk \cite{S} and
Kemperman \cite{K1,K2} (see history of this result in \cite{Lev}):

\begin{theorem}\label{KS} Let  $A,B$ be two finite subsets of some abelian group $G$.
Assume that some element $c\in A\cdot B$ has unique representation
as $c=ab$, $a\in A,b\in B$. Then  $|A\cdot B|\geqslant |A|+|B|-1$.
\end{theorem}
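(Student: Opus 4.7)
I will prove Theorem~\ref{KS} by induction on $|B|$, using Dyson's $e$-transform and working additively (since $G$ is abelian the two notations are equivalent). After translating $A$ and $B$, I may assume $0 \in A \cap B$ and that $0 = 0+0$ is the unique representation of $0$ in $A+B$. The base cases $|A| = 1$ or $|B| = 1$ give $|A+B| = |A|+|B|-1$ immediately, so assume $|A|, |B| \geq 2$.

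For the inductive step, fix $e \in A \setminus \{0\}$ and set
\[
A' = A \cup (B+e), \qquad B' = B \cap (A-e).
\]
Inclusion--exclusion yields $|A'|+|B'| = |A|+|B|$, and $A'+B' \subseteq A+B$ follows by a two-case check: $A + B' \subseteq A+B$ is immediate, and for $b_1+e \in B+e$ and $a_1-e \in A-e$ one has $(b_1+e)+(a_1-e) = a_1+b_1 \in A+B$.

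The crux is that unique representability transfers to $(A',B')$. Suppose $a'+b' = 0$ with $a' \in A'$, $b' \in B'$. Either $a' \in A$, and uniqueness for $(A,B)$ forces $a'=b'=0$; or $a' = b_1+e$ with $b_1 \in B$ and $b' = a_1-e$ with $a_1 \in A$, in which case $a_1+b_1 = 0$ and the original uniqueness forces $(a_1,b_1) = (0,0)$, so $(a',b') = (e,-e)$. But $b' = -e \in B' \subseteq B$ together with $e \in A$, $e \ne 0$ would produce a second representation $0 = e + (-e)$ in $A+B$, contradicting the hypothesis.

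To actually run the induction I still need some admissible $e$ with $|B'|<|B|$. If instead $B' = B$ for every $e \in A$, then $B+e \subseteq A$ for all $e \in A$; finiteness of $A$ upgrades this to $b+A = A$ for every $b \in B$, so in particular $-b \in A$ for every $b \in B \setminus \{0\}$, yielding a second representation $0 = (-b)+b$ --- again contradicting uniqueness. Picking such $e$ and applying the inductive hypothesis to $(A',B')$ gives $|A+B| \geq |A'+B'| \geq |A'|+|B'|-1 = |A|+|B|-1$. The main obstacle I anticipate is precisely the preservation-of-uniqueness step: it is the only place where the Kemperman--Scherk hypothesis is genuinely used, and it requires careful case analysis on which ``branch'' of $A'$ and $B'$ a hypothetical second representation arises from; everything else is routine bookkeeping.
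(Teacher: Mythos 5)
Your overall strategy --- induction via the Dyson $e$-transform --- is exactly the method the paper alludes to for Theorem~\ref{KS} (the paper gives no details, only the hint $(A,B)\to(A,xB)\to(A\cap xB,A\cup xB)$), and most of your steps check out: the translation normalization, the identity $|A'|+|B'|=|A|+|B|$, the inclusion $A'+B'\subseteq A+B$, and the transfer of unique representability for $e\ne 0$ are all correct. The genuine gap is in the existence of an admissible $e$. You define the transform only for $e\in A\setminus\{0\}$, but in the degenerate case you argue from ``$B+e\subseteq A$ for all $e\in A$'' to $b+A=A$. What you actually get from the failure of every admissible $e$ is only $B+e\subseteq A$ for $e\in A\setminus\{0\}$, and from this $b+A=A$ does \emph{not} follow. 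Concretely: in $G=(\mathbb{Z}/2)^2$ take $A=\{(0,0),(0,1),(1,1)\}$ and $B=\{(0,0),(1,0)\}$. Here $0=0+0$ is the unique representation of $0$ (representations of $0$ are pairs $(a,a)$ with $a\in A\cap B=\{(0,0)\}$, as every element is its own inverse), yet $B+e\subseteq A$ for both nonzero $e\in A$, so $B'=B$ for every $e\in A\setminus\{0\}$ and your induction cannot start; moreover $(1,0)+A\ne A$ and $-(1,0)=(1,0)\notin A$, so no second representation of $0$ materializes and no contradiction is reached.

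The repair is small: admit $e=0$ as well, i.e., also allow the transform $A'=A\cup B$, $B'=A\cap B$. Unique representability still transfers for $e=0$, but by a slightly different argument than your Case 2: a representation $a'+b'=0$ with $a'\in B$, $b'\in A$ can, by commutativity, be read as a representation $b'+a'=0$ with $b'\in A$, $a'\in B$, which forces $(b',a')=(0,0)$. Once $e=0$ is admitted, the degenerate case genuinely yields $B+e\subseteq A$ for \emph{all} $e\in A$, hence $B+A\subseteq A$, hence $b+A=A$ by finiteness of $A$, hence $-b\in A$ for every $b\in B$ and the contradiction you want; and in the example above $e=0$ gives $|B'|=|A\cap B|=1<|B|$, so the induction proceeds. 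With this one adjustment your proof is complete and is essentially the argument the paper has in mind.
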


This is proved purely combinatorial methods,
that is, by by Dyson changes $(A,B)\rightarrow (A,xB)\rightarrow (A\cap xB,a\cup xB)$.

Consider now the simplest restricted version. 
We denote $A\starr B=\{ab:a\in A,b\in B,a\ne b\}$. If group $G$ is additive,
we use $\dotplus$.

We use polynomial method, in particular, the following useful corollary 
\cite{AF} of
Alon's Combinatorial Nullstellensatz \cite{Alon}:

\begin{theorem}\label{AlFu} If $X$, $Y$ are finite non-empty subsets of
the field $K$, and polynomial $f(x,y)$ equals to 0 on all but 1 points
of the grid $X\times Y$, then $\deg f\geq |X|+|Y|-2$.
\end{theorem}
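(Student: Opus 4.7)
The plan is to derive Theorem~\ref{AlFu} from Alon's Combinatorial Nullstellensatz by a contradiction argument, constructing an auxiliary polynomial whose top-degree monomial witnesses a non-vanishing point. Write $m = |X|$, $n = |Y|$, and let $(x_0, y_0)$ be the unique grid point at which $f$ does not vanish. Suppose for contradiction that $\deg f \leq m + n - 3$. First I would produce the explicit ``complementary'' witness polynomial
$$g(x, y) = \prod_{x' \in X \setminus \{x_0\}} (x - x') \cdot \prod_{y' \in Y \setminus \{y_0\}} (y - y'),$$
which has total degree exactly $m + n - 2$, vanishes on $(X \times Y) \setminus \{(x_0, y_0)\}$, and is nonzero at $(x_0, y_0)$.

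Next I would form the combination
$$h(x, y) = g(x_0, y_0)\, f(x, y) - f(x_0, y_0)\, g(x, y),$$
which vanishes identically on $X \times Y$: at $(x_0, y_0)$ by cancellation, and at every other grid point because both $f$ and $g$ already vanish there.

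The crucial observation is a degree and coefficient count on $h$. Under the assumption $\deg f < m + n - 2$, the term $g(x_0, y_0)\,f$ contributes nothing to the monomial $x^{m-1} y^{n-1}$, while $g$ is monic in this leading bi-degree, so the coefficient of $x^{m-1}y^{n-1}$ in $h$ is exactly $-f(x_0, y_0) \neq 0$, and the total degree of $h$ is $(m-1)+(n-1)$. Applying Alon's Combinatorial Nullstellensatz with exponents $t_1 = m-1$, $t_2 = n-1$ and the sets $X, Y$ (whose sizes exceed these exponents by $1$) yields a point of $X \times Y$ where $h$ does not vanish, contradicting the previous paragraph.

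I do not foresee a substantial obstacle beyond the bookkeeping of this single coefficient; the whole point is that the hypothesis $\deg f \leq m+n-3 < (m-1)+(n-1)$ forces $f$ to contribute $0$ to the coefficient of $x^{m-1}y^{n-1}$ in $h$, leaving only the $g$-contribution, which is manifestly nonzero. If instead one worked with $\deg f = m+n-2$ exactly, the coefficient from $f$ might cancel that from $g$, which is precisely why the bound in the theorem is sharp and cannot be strengthened by this method.
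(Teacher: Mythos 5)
Your proof is correct, but it takes a different route from the paper's. The paper derives Theorem~\ref{AlFu} from the explicit coefficient identity \eqref{formula}: applying \eqref{formula} to $f$ itself with $A=X$, $B=Y$, the left-hand side vanishes because $\deg f<|X|+|Y|-2$, while the right-hand side reduces to the single nonzero summand coming from the exceptional point --- a one-line contradiction. You instead treat Alon's Combinatorial Nullstellensatz as a black box and build the auxiliary polynomials $g$ and $h=g(x_0,y_0)f-f(x_0,y_0)g$; the check that $h$ vanishes on the grid, has degree $(m-1)+(n-1)$, and has coefficient $-f(x_0,y_0)\ne 0$ at $x^{m-1}y^{n-1}$ is all accurate, and the application of the Nullstellensatz with $t_1=m-1<|X|$, $t_2=n-1<|Y|$ is legitimate. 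The two arguments are close in spirit --- your combination $h$ essentially isolates the one nonzero term that the paper reads off directly from \eqref{formula} --- but they differ in what they presuppose: the paper's version requires the quantitative formula of Laso\'n and Karasev--Petrov, which it needs anyway for the coefficient computation in Theorem~\ref{main}, so deducing Theorem~\ref{AlFu} from it costs nothing extra; your version needs only the qualitative non-vanishing statement of the Nullstellensatz and is the more self-contained, textbook-style derivation. Your closing remark about why the method cannot reach $\deg f=m+n-2$ (possible cancellation between the $f$- and $g$-contributions to the top coefficient) is also a correct observation about sharpness.
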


This in turn follows from the following formula \cite{L,KP} expressing a coefficient of $x^{|A|-1}y^{|B|-1}$
at any polynomial $f(x,y)$, $\deg f(x,y)\leq |A|+|B|-2$:
\begin{equation}\label{formula}
[x^{|A|-1}y^{|B|-1}] f(x,y)=\sum_{x\in A,y\in B} \frac{f(t,s)}{\prod_{\tau\in A\setminus t} (t-\tau)
\prod_{\xi\in B\setminus s}(s-\xi)}.
\end{equation}
Indeed, if $\deg f<|X|+|Y|-2$ in  Theorem \ref{AlFu}, then 
apply \eqref{formula} for the polynomial $f(x,y)$ and sets 
$X=A$, $Y=B$: LHS of \eqref{formula} equals to 0, 
while in RHS there exists a unique
non-zero summand.

Further we need this formula itself. The following result is essentially contained in
\cite{PS} (the formal difference is that we consider number of representations $c=a+b, a\ne b$,
while Pan and Sun consider all representations $c=a+b$.)

\begin{theorem}\label{additive}
Let  $A,B$ be two finite subsets of additive group of the field.
Assume that some element $c\in A\dotplus B$ has unique representation
as $c=a+b$, $a\in A,b\in B$, $a\ne b$. Then  $|A\dotplus B|\geqslant |A|+|B|-2$.
\end{theorem}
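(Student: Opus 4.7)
The plan is to apply Theorem \ref{AlFu} to a carefully chosen polynomial on the grid $A \times B$. Let $c = a_0 + b_0$ be the unique representation with $a_0 \in A$, $b_0 \in B$, $a_0 \neq b_0$. The aim is to exhibit a polynomial $f(x,y)$ of degree exactly $|A \dotplus B|$ that vanishes on all points of $A \times B$ except at $(a_0, b_0)$; Theorem \ref{AlFu} will then immediately force $|A \dotplus B| \geq |A| + |B| - 2$.

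The candidate is
\[
f(x,y) \;=\; (x-y) \prod_{d \in (A \dotplus B) \setminus \{c\}} (x + y - d).
\]
The product factor handles all other values of the restricted sumset, while the linear factor $(x-y)$ is the key device that encodes the restriction $a \neq b$. To verify that $f$ does the job, pick any $(a,b) \in A \times B$ with $(a,b) \neq (a_0, b_0)$: if $a = b$ the first factor vanishes, while if $a \neq b$ then $a + b \in A \dotplus B$, and by the uniqueness of the representation of $c$ one has $a + b \neq c$, so the corresponding factor $(x + y - (a+b))$ in the product vanishes at $(a,b)$. Conversely,
\[
f(a_0, b_0) \;=\; (a_0 - b_0)\prod_{d \in (A \dotplus B) \setminus \{c\}} (c - d) \;\neq\; 0,
\]
since $a_0 \neq b_0$ and the field has no zero divisors, so $f$ is a nonzero polynomial.

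Since $\deg f = 1 + (|A \dotplus B| - 1) = |A \dotplus B|$, Theorem \ref{AlFu} yields $|A \dotplus B| \geq |A| + |B| - 2$, which is the desired conclusion. The only non-routine step is the recognition that the restriction $a \neq b$ can be encoded by a single linear factor $(x-y)$, which costs exactly one in the degree; this is precisely what drops the bound from the $|A|+|B|-1$ of Theorem \ref{KS} to $|A|+|B|-2$ in the restricted setting. There is no serious obstacle once this polynomial is written down.
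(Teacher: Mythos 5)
Your proof is correct and is essentially the paper's own argument: the paper phrases it as covering $A\times B$ minus one point by the $|A\dotplus B|$ lines $x=y$ and $x+y=\gamma$, which is exactly your polynomial $(x-y)\prod_{d\in(A\dotplus B)\setminus\{c\}}(x+y-d)$, followed by the same appeal to Theorem~\ref{AlFu}.
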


\begin{proof} Assume the contrary. Consider $|A\dotplus B|$ lines: $x=y$ and $x+y=\gamma$, 
$\gamma\in (A\starr B)\setminus c$. They cover all points of $A\times B$ except
$(a,b)$. Hence $|A\dotplus B|\geqslant |A|+|B|-2$
by Alon--F\"uredi Theorem \ref{AlFu}.
\end{proof}

\begin{corollary} Let  $A$ be a finite subset of additive group of the field.
Assume that some element $c\in A\dotplus A$ has exactly two (symmetric to
each other) representations
as $c=a+b=b+a$, $a,b\in A$, $a\ne b$. Then  $|A\dotplus A|\geqslant 2|A|-3$.
\end{corollary}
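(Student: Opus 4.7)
The plan is to repeat the argument used in Theorem \ref{additive}, paying one extra degree to compensate for the fact that $c$ now has two representations instead of one. First I would consider the family of lines consisting of the diagonal $x=y$ together with $x+y=\gamma$ for each $\gamma\in (A\dotplus A)\setminus\{c\}$. Every off-diagonal pair $(u,v)\in A\times A$ with $u+v\ne c$ lies on one such line, while the diagonal itself is handled by $x=y$. The only uncovered points are the two representations of $c$, namely $(a,b)$ and $(b,a)$, and both are off-diagonal because $a\ne b$.

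To reduce from two uncovered points to the single uncovered point required by Theorem \ref{AlFu}, I would introduce an extra linear factor breaking the symmetry, e.g.\ $(x-a)$. It vanishes at $(a,b)$ since the first coordinate is $a$, but takes the nonzero value $b-a$ at $(b,a)$. Consequently the polynomial
\[
f(x,y) = (x-a)(x-y)\prod_{\gamma\in (A\dotplus A)\setminus\{c\}}(x+y-\gamma)
\]
vanishes on every point of $A\times A$ except $(b,a)$, and has degree $1+1+(|A\dotplus A|-1) = |A\dotplus A|+1$. Applying Theorem \ref{AlFu} with $X=Y=A$ yields $|A\dotplus A|+1 \geq 2|A|-2$, that is $|A\dotplus A|\geq 2|A|-3$.

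I do not foresee a real obstacle: the only new idea beyond the proof of Theorem \ref{additive} is the single extra linear factor that distinguishes the two symmetric representations of $c$. The only thing to double-check is that this added factor does not force $f$ to vanish at $(b,a)$, which fails precisely because $a\ne b$ is given in the hypothesis; everywhere else the vanishing is guaranteed by the factors already present in the Theorem \ref{additive} construction.
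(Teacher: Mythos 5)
Your proof is correct, but it takes a different route from the paper. The paper's entire argument is a one-line reduction: set $B=A\setminus a$, observe that $c$ then has a \emph{unique} representation $c=a+b$ with $a\in A$, $b\in B$, $a\ne b$ (the symmetric representation $(b,a)$ disappears because $a\notin B$), apply Theorem \ref{additive} to get $|A\dotplus B|\geq |A|+|B|-2=2|A|-3$, and use $A\dotplus B\subseteq A\dotplus A$. You instead re-run the polynomial argument directly on the full grid $A\times A$, paying for the second representation with the explicit extra factor $(x-a)$, which annihilates $(a,b)$ while leaving $(b,a)$ uncovered; your degree count $|A\dotplus A|+1\geq 2|A|-2$ is right and yields the same bound. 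The two mechanisms are really the same trade seen from two sides: the paper removes a point of the grid (shrinking $|Y|$ by one costs one in the Alon--F\"uredi lower bound), while you keep the grid and spend one extra degree on a linear factor through that point. The paper's reduction is shorter and reuses Theorem \ref{additive} as a black box; your version is self-contained and makes visible exactly where the loss from $2|A|-2$ to $2|A|-3$ occurs.
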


\begin{proof}
Denote $B=A\setminus a$ and apply Theorem \ref{additive}.
\end{proof}

Now consider the case of multiplicative group of the field. 
This is proved to be useful approach for studying cyclic
groups, in the context of restricted sumsets/product sets
it originated from \cite{K}.

The estimate becomes slightly worse:

\begin{theorem}\label{multiplicative} 
Let  $A,B$ be two finite subsets of the multiplicative group of a field.
Assume that some element $c\in A\starr B$ has unique representation
as $c=ab$, $a\in A,b\in B$, $a\ne b$. Then  $|A\starr B|\geqslant |A|+|B|-3$.
\end{theorem}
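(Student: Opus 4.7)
The plan is to follow the template of Theorem \ref{additive} via Theorem \ref{AlFu}, but to overcome the obstacle that the product-curves $xy=\gamma$ have total degree $2$, so the naive polynomial $(x-y)\prod_{\gamma\in(A\starr B)\setminus c}(xy-\gamma)$ has degree $2|A\starr B|-1$ and would yield only the weak bound $|A\starr B|\geq (|A|+|B|-1)/2$. The trick is to change one coordinate: substitute $v=1/y$, which linearises the hyperbolas $xy=\gamma$ into the lines $x=\gamma v$, at the cost of turning the diagonal $x=y$ into the conic $xv=1$.

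Concretely, set $B^{-1}=\{1/\beta:\beta\in B\}$, which has the same cardinality as $B$ since $0\notin B$. Every $(x_0,v_0)\in A\times B^{-1}$ corresponds to $(x_0,y_0)\in A\times B$ with $y_0=1/v_0$; either $x_0=y_0$, in which case $x_0v_0=1$, or $x_0y_0=x_0/v_0\in A\starr B$, and by uniqueness of the representation this product equals $c$ only when $(x_0,y_0)=(a,b)$. Hence the polynomial
\[
f(x,v)=(xv-1)\prod_{\gamma\in(A\starr B)\setminus c}(x-\gamma v)
\]
vanishes on $(A\times B^{-1})\setminus\{(a,1/b)\}$. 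At $(a,1/b)$ one has $a/b-1\ne 0$ (since $a\ne b$) and $a-\gamma/b=(c-\gamma)/b\ne 0$ for each $\gamma\ne c$, so $f(a,1/b)\ne 0$.

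Applying Theorem \ref{AlFu} to $f$ on the grid $A\times B^{-1}$ forces $\deg f\geq |A|+|B|-2$. Since $\deg(xv-1)=2$ and each factor $x-\gamma v$ is linear, $\deg f=|A\starr B|+1$, and the desired bound $|A\starr B|\geq|A|+|B|-3$ follows. The only real obstacle is spotting the correct coordinate change; once one accepts that the diagonal hyperbola $xv=1$ costs two degrees (versus one for the line $x=y$ in the additive case), the one-unit gap between Theorems \ref{additive} and \ref{multiplicative} is exactly what this argument produces.
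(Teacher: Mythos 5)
Your proof is correct and is essentially identical to the paper's: the paper also passes to the grid $A\times B^{-1}$ and uses the very same polynomial $(xy-1)\prod_{\gamma\in(A\starr B)\setminus c}(x-\gamma y)$ with Alon--F\"uredi, merely writing $y$ where you write $v$. Your write-up just spells out the verification steps that the paper leaves implicit.
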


\begin{proof} Consider the set $\setminus (a,b^{-1})$.
A polynomial $(xy-1)\prod_{\gamma\in (A\starr B)\setminus c} (x-\gamma y)$
vanishes on all points of $A\times B^{-1}$ except $(a,b^{-1})$. Hence its degree
$|C|+1$ is not less than $|A|+|B|-2$. 
\end{proof}

Note that this bound can not be improved in full generality. For example,
take the
sets $A=\{1,w,\dots,w^{n-1}\}$, $B=\{1,w,\dots,w^{n-2}\}$, where 
$w^{2n-4}=1$, and $w$ is primitive root of 1 of power $2n-4$. 
Then $|A|=n$, $|B|=n-1$, $|A\starr B|=2n-4$
and 1 has unique representation $w^{n-1}\cdot w^{n-3}$.

\begin{corollary} Let  $A$ be a finite subset of the multiplicative group of a field.
Assume that some element $c\in A\starr A$ has exactly two (symmetric to
each other) representations
as $c=ab=ba$, $a,b\in A$, $a\ne b$. Then  $|A\starr A|\geqslant 2|A|-4$.
\end{corollary}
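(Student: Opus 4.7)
The plan is to mimic the additive corollary's strategy: reduce to Theorem \ref{multiplicative} by deleting one of the two symmetric representatives from one of the factors, turning the two representations into a unique one.

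Concretely, I would set $B := A \setminus \{a\}$. In the restricted product $A \starr B$, the element $c$ still has the representation $c = a \cdot b$ (legitimate since $b \in B$ because $b \ne a$, and $a \ne b$). The other representation $c = b \cdot a$ is killed because its second factor $a$ no longer belongs to $B$. By the hypothesis that $c$ has exactly the two symmetric representations in $A \starr A$, there are no further representations in $A \starr B$, so $c$ has a unique representation as $xy$ with $x \in A$, $y \in B$, $x \ne y$.

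Now Theorem \ref{multiplicative} applies to the pair $(A, B)$ and yields
\[
|A \starr B| \;\geq\; |A| + |B| - 3 \;=\; |A| + (|A|-1) - 3 \;=\; 2|A| - 4.
\]
Since $B \subseteq A$, every product $xy$ with $x \in A$, $y \in B$, $x \ne y$ is also a product in $A \starr A$, so $A \starr B \subseteq A \starr A$. Combining these gives $|A \starr A| \geq 2|A| - 4$, as desired.

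There is no real obstacle here; the whole content is packaged in Theorem \ref{multiplicative}, and the only thing to check is that removing $a$ from one copy of $A$ indeed leaves exactly one representation of $c$, which follows directly from the hypothesis that the only representations of $c$ in $A \starr A$ are the symmetric pair $ab$ and $ba$.
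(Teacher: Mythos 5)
Your proof is correct and follows essentially the same route as the paper, which sets $B=A\setminus b$ (the symmetric choice to your $A\setminus\{a\}$) and applies Theorem \ref{multiplicative}. You spell out the verification that only one representation survives and that $A\starr B\subseteq A\starr A$, which the paper leaves implicit.
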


\begin{proof}
Take $B=A\setminus b$ and apply Theorem \ref{multiplicative}
\end{proof}

This partially support conjecture of V. Lev \cite{Lev}.

Again example of $A=\{1,w,\dots,w^{n-1}\}$, $w^{2n-4}=1$ proves that
this bound is tight.

However, this may be improved in particular case $a^{n-2}\ne b^{n-2}$, where
$n=|A|$
(which just does not hold in our example):

\begin{theorem}\label{main}
Let  $A$, be a finite subset of the multiplicative group of a field, $|A|=n$.
Assume that some element $c\in A\starr A$ has exactly two (symmetric to
each other) representations
as $c=ab=ba$, $a,b\in A$, $a\ne b$. Assume also that
$a^{n-2}\ne b^{n-2}$. Then  $|A\starr A|\geqslant 2n-3$.
\end{theorem}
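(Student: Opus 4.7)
I would argue by contradiction. Supposing $|A\starr A|\leq 2n-4$ and combining with the preceding Corollary forces $m:=|A\starr A|=2n-4$ exactly. The plan is then to apply formula~\eqref{formula} on the full grid $X=A$, $Y=A^{-1}$, rather than on the smaller grid $A\times (A\setminus\{b\})^{-1}$ used in the Corollary. Enlarging to $A\times A^{-1}$ costs the uniqueness of representation of $c$ (we produce two nonvanishing points of $f$ instead of one), but it gains one unit of slack in the degree count, and that slack is exactly what delivers the improvement.

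Use the same polynomial as in Theorem~\ref{multiplicative}:
\[
f(x,y)=(xy-1)\prod_{\gamma\in(A\starr A)\setminus c}(x-\gamma y).
\]
At $(a',b'^{-1})\in A\times A^{-1}$ the factor $xy-1$ vanishes iff $a'=b'$ and a factor $x-\gamma y$ vanishes iff $a'b'=\gamma$, so $f$ vanishes on the entire grid except at $(a,b^{-1})$ and $(b,a^{-1})$. Since $\deg f=m+1=2n-3$ is strictly less than $|A|+|A^{-1}|-2=2n-2$, the coefficient $[x^{n-1}y^{n-1}]f$ is zero, and formula~\eqref{formula} with $X=A$, $Y=A^{-1}$ becomes a two-term identity between $f(a,b^{-1})$ and $f(b,a^{-1})$ weighted by the respective Lagrange denominators.

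It remains to compute both sides. One finds $f(a,b^{-1})=(a-b)\Pi/b^{m}$ and $f(b,a^{-1})=-(a-b)\Pi/a^{m}$, where $\Pi=\prod_{\gamma\ne c}(c-\gamma)\ne 0$. In the two denominators the factors $\prod_{a'\ne a}(a-a')$, $\prod_{a'\ne b}(b-a')$ and $\prod_{a'\in A}a'$ enter symmetrically between the two terms; the sole asymmetry, coming from $\prod_{b'\ne b}(b^{-1}-b'^{-1})$ versus $\prod_{b'\ne a}(a^{-1}-b'^{-1})$, contributes $b^{-(n-2)}$ in one term against $a^{-(n-2)}$ in the other. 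After cancellation, the identity reduces to $a^{m-n+2}=b^{m-n+2}$, i.e.\ $a^{n-2}=b^{n-2}$, contradicting the hypothesis. The only nontrivial step in this plan is this last piece of bookkeeping, i.e.\ tracking which factors are symmetric under $a\leftrightarrow b$; once the dimension count is identified, the algebra is forced.
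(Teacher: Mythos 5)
Your proposal is correct and follows essentially the same route as the paper: the same polynomial $(xy-1)\prod_{\gamma\in(A\starr A)\setminus c}(x-\gamma y)$ on the full grid $A\times A^{-1}$, the vanishing of $[x^{n-1}y^{n-1}]f$ via formula~\eqref{formula}, and the resulting two-term identity whose asymmetry reduces to $a^{n-2}=b^{n-2}$. The bookkeeping you describe matches the paper's computation.
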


\begin{proof}
Assume that
$|A\starr A|\leqslant 2n-4$. A polynomial 
$$f(x,y):=(xy-1)\prod_{\gamma\in (A\starr A)\setminus c} (x-\gamma y)$$ of degree at most $2n-3$
vanishes on all points of $A\times A^{-1}$ except $(a,b^{-1})$ and $(b,a^{-1})$.
It allows to calculate its 
coefficient of $x^{n-1}y^{n-1}$ by the formula \eqref{formula}:
$$
[x^{n-1}y^{n-1}]f(x,y)=\sum_{t\in A,s\in A^{-1}} \frac{f(t,s)}{\prod_{\tau\in A\setminus t}(t-\tau) 
\prod_{\xi\in A^{-1}\setminus s}(s-\xi)}.
$$

We have two non-zero summands corresponding to points $(t,s)=(a,b^{-1})$
and $(t,s)=(b,a^{-1}).$ The first of them equals
\begin{align*}
\frac{(ab^{-1}-1)\prod_{\gamma\in (A\starr A)\setminus c} (a-\gamma b^{-1})}
{\prod_{\tau\in A\setminus a}(a-\tau) \prod_{\xi\in A^{-1}\setminus b^{-1}}(b^{-1}-\xi)}=\\
(a-b)b^{4-2n}\frac{\prod_{\gamma\in (A\starr A)\setminus c} (ab-\gamma)}
{(-1)^{n-1}\prod_{\tau\in A\setminus a}(a-\tau) \prod_{\xi\in A^{-1}\setminus b^{-1}}b^{-1}\xi(b-\xi^{-1})}=\\
(a-b)b^{2-n}(-1)^{n-1}\frac{\prod_{\gamma\in (A\starr A)\setminus c} (ab-\gamma)}
{\prod_{\tau\in A\setminus a}(a-\tau) \prod_{\xi\in A^{-1}\setminus b^{-1}}(b-\xi^{-1})
\prod_{\xi \in A^{-1}} \xi.
}
\end{align*}

The second summand has analogous expression, just change $a$ and $b$.
We see that their sum is not equal to 0 provided
that $a^{n-2}\ne b^{n-2}$.
\end{proof}

Lev posed an interesting question on estimating $|A\starr B|$ provided that
$A\starr B\ne A\times B$. What we managed to prove in this direction
(again for multiplicative groups of the field)  is in most cases
weaker than his conjecture:

\begin{theorem} Denote $N=\{a\in A\cap B:a^2\notin A\starr B\}$. Then
$|A\starr B|\geq |A|+|B|-2-[N/2]$.
\end{theorem}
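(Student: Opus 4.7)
The plan is to follow the polynomial-method template of Theorem \ref{multiplicative}, but to replace the single application of Alon--F\"uredi by a more quantitative use of the identity \eqref{formula} combined with an auxiliary ``correction'' polynomial. I would work on the grid $A\times B^{-1}$ with the polynomial
$$f(x,y):=\prod_{\gamma\in A\starr B}(x-\gamma y),\qquad \deg f=|A\starr B|.$$
At a point $(a,b^{-1})$ of the grid one has $f(a,b^{-1})=0$ exactly when $ab\in A\starr B$. This is automatic whenever $a\ne b$, while on the diagonal ($a=b$, which forces $a\in A\cap B$) it fails precisely for $a\in N$. Thus $f$ vanishes on all of $A\times B^{-1}$ except at the $|N|$ diagonal points $(a,a^{-1})$, $a\in N$, and at each of these the denominator-adjusted value
$$v_a:=\frac{f(a,a^{-1})}{\prod_{\tau\in A\setminus a}(a-\tau)\prod_{\xi\in B^{-1}\setminus a^{-1}}(a^{-1}-\xi)}$$
is non-zero.

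Set $k:=\lfloor|N|/2\rfloor$ and assume, toward contradiction, that $|A\starr B|\le|A|+|B|-3-k$. For any polynomial $g(x,y)$ of total degree at most $k$, the product $fg$ has degree at most $|A|+|B|-3<|A|+|B|-2$, so the coefficient of $x^{|A|-1}y^{|B|-1}$ in $fg$ vanishes. Applying \eqref{formula} (with $A,B^{-1}$ in place of $A,B$) to $fg$ collapses the right-hand side to the non-vanishing points and yields
$$\sum_{a\in N}v_a\,g(a,a^{-1})=0.$$
Specializing $g$ to the $2k+1$ monomials $x^j$ ($0\le j\le k$) and $y^j$ ($1\le j\le k$) produces the Laurent-style relations
$$\sum_{a\in N}v_a\,a^j=0\qquad(j=-k,-k+1,\dots,k).$$

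The main (and essentially only) remaining point is to derive $v_a=0$ for every $a\in N$ from these relations, which contradicts $v_a\ne 0$ and closes the argument. Setting $w_a:=v_a\,a^{-k}$ rewrites the system as $\sum_{a\in N}w_a\,a^\ell=0$ for $\ell=0,1,\dots,2k$; because $2k+1\ge|N|$ -- which is exactly what $k=\lfloor|N|/2\rfloor$ is designed to ensure -- the first $|N|$ of these equations form a non-singular Vandermonde system in the distinct non-zero elements of $N$, forcing $w_a\equiv 0$ and hence $v_a\equiv 0$. The trade-off between the $2k+1$ available relations and the $|N|$ unknowns is exactly what produces the floor $\lfloor|N|/2\rfloor$ in the statement; the whole argument crucially exploits that the exceptional points are confined to the single curve $xy=1$, so that polynomials of degree $\le k$ restrict there to Laurent polynomials of degrees in $[-k,k]$.
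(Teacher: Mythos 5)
Your proof is correct, and it shares the paper's setup exactly --- the polynomial $f(x,y)=\prod_{\gamma\in A\starr B}(x-\gamma y)$ on the grid $A\times B^{-1}$, whose only non-vanishing grid points are the $|N|$ points $(a,a^{-1})$, $a\in N$, lying on the hyperbola $xy=1$ --- but it finishes by a genuinely different route. The paper multiplies $f$ by $[N/2]$ explicit lines, each chosen to pass through two of the exceptional points, so that the product vanishes at all but one point of the grid, and then quotes the Alon--F\"uredi bound (Theorem \ref{AlFu}); you instead test the coefficient formula \eqref{formula} against all monomials of degree at most $k=[N/2]$ and kill the weights $v_a$ by a Vandermonde argument. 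The two arguments are dual to one another: the product of the paper's covering lines is precisely an explicit polynomial $g$ of degree $k$ with $g(a,a^{-1})=0$ for all but one $a\in N$, i.e.\ a concrete witness that the functional $g\mapsto\sum_{a\in N}v_a\,g(a,a^{-1})$ does not vanish identically on polynomials of degree $\le k$, which is exactly what your Vandermonde system establishes non-constructively. Your route is longer but makes the source of the loss $[N/2]$ transparent (the $2k+1$ Laurent exponents $-k,\dots,k$ available on $xy=1$ against the $|N|$ unknowns $v_a$), and it inlines the proof of Theorem \ref{AlFu} from \eqref{formula} rather than invoking it. One caveat, shared with the paper: both proofs (and, read literally, the statement itself) need $N\neq\emptyset$ --- your contradiction requires at least one non-zero $v_a$, just as the paper needs one exceptional point to leave uncovered; this is harmless, since $N=\emptyset$ is exactly the case $A\starr B=A\cdot B$ excluded by Lev's question.
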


\begin{proof} For any $\gamma\in A\starr B$ consider a line $x=\gamma y$.
They cover all points of $A\times B^{-1}$ except $N$ points $(a,a^{-1})$
on hyperbola $xy=1$. Add $[N/2]$ lines covering all those points except 1
(this is clearly possible) and apply Alon--F\"uredi Theorem \ref{AlFu}.
\end{proof}

I am deeply grateful to Gyula K\'arolyi and Vsevolod Lev for fruitful
discussions.

\end{document}